\newcounter{cnt1}
\newcounter{cnt2}
\newcommand{\blr}{\begin{list}{$(\roman{cnt1})$}
    {\usecounter{cnt1} \setlength{\topsep}{0pt}
        \setlength{\itemsep}{0pt}}}
\newcommand{\bla}{\begin{list}{$($\alph{cnt2}$)$}
    {\usecounter{cnt2} \setlength{\topsep}{0pt}
        \setlength{\itemsep}{0pt}}}
\newcommand{\el}{\end{list}}
\newtheorem{thm}{Theorem}
\newtheorem{lem}[thm]{Lemma}
\newtheorem{Def}[thm]{Definition}
\newtheorem{prop}[thm]{Proposition}
\newtheorem{rem}[thm]{Remark}
\newcommand{\Rem}{\begin{rem} \rm}
\newcommand{\bdfn}{\begin{Def} \rm}
\newcommand{\edfn}{\end{Def}}
\begin{document}
\large
\title[Biadjoint operators]{ Orthogonality for biadjoints of operators }
\author[Rao]{T. S. S. R. K. Rao}
\address[T. S. S. R. K. Rao]
{Department of Mathematics\\ Shiv Nadar University \\\ Delhi-NCR\\Adjunct Professor\\CARAMS\\
Manipal Academy of Higher Education \\
Manipal\\ India,
\textit{E-mail~:}
\textit{srin@fulbrightmail.org}}
\subjclass[2000]{Primary 47L05, 46B20, 46B25  }
 \keywords{
Non-reflexive Banach spaces, Spaces of operators, Biadjoints, Birkhoff-James orthogonality
 } \maketitle
\begin{abstract}
Let $X,Y$ be non-reflexive Banach spaces. Let ${\mathcal L}(X,Y)$ be the space of bounded operators from $X$ to $Y$. For $T \in {\mathcal L}(X,Y)$, and a closed subspace $Z \subset Y$, this paper deals with the question, if $T \perp {\mathcal L}(X,Z)$ in the sense of Birkhoff-James, when is $ T^{\ast\ast} \perp {\mathcal L}(X^{\ast\ast},Z^{\bot\bot}) $? If $Z \subset Y$ is a subspace of finite codimension which is the kernel of projection of norm one, we show that this is always the case. Moreover in this case, there is an extreme point $\Lambda $ of the unit ball of the bidual $ X^{\ast\ast}$  such that $\|T^{\ast\ast}\|= \|T^{\ast\ast}(\Lambda)\|$ and $T^{\ast\ast}(\Lambda) \perp Z^{\bot\bot}$.

\end{abstract}
\section { Introduction}
Let $X,Y$ be, real, non-reflexive Banach spaces. An important problem in operator theory is to consider standard Banach space theoretic operations on the space of operators, ${\mathcal L}(X,Y)$ or its subspaces, like the space of compact operators, weakly compact operators, etc., and see if the resulting structure is again (or in ) a space of operators, perhaps between different Banach spaces. For example, one important property that has attracted a lot of attention recently (and also has some bearing on the current investigation), is the quotient space problem. For Banach spaces $X,Y,Z$ with $Z \subset Y$, in the space of operators ${\mathcal L}(X,Y)$, when one performs the `quotient operation' with the subspace ${\mathcal L}(X,Z)$, a natural question is to ask, when is the quotient space ${\mathcal L}(X,Y)/{\mathcal L}(X,Z)$, surjective isometric to a space of operators? Here a natural target space is ${\mathcal L}(X,Y/Z)$.
If $\pi: Y \rightarrow Y/Z$ denotes the quotient map, then a possible isometry between the spaces involved is, $[T] \rightarrow \pi \circ T$ where the equivalence class, $[T] \in {\mathcal L}(X,Y)/{\mathcal L}(X,Z)$.
See \cite{MBF}, \cite{BFR} and \cite{R2} .

\vskip 1em

 In this paper we investigate similar aspects for the Banach space theoretic property of Birkhoff-James orthogonality and the operator theoretic property of taking biadjoints of an operator. 
Let $Z \subset Y$ be a closed subspace, let $y \in Y$. We recall that $y \perp Z$ in the sense of Birkhoff-James, if $\|y+z\|\geq \|y\|$ all $z \in Z$. Deviating from standard notation, we denote the set of all such $y$ by $Z^{\dagger}$. For a closed subspace $M \subset {\mathcal L}(X,Y)$, with $T \perp M$,  we are interested in the question, when is $T^{\ast\ast}$ orthogonal to the smallest weak$^\ast$-closed subspace of ${\mathcal L}(X^{\ast\ast},Y^{\ast\ast})$, containing $\{S^{\ast\ast} : S \in M\}$. 
\vskip 1em

 Let $Z \subset Y$ be a closed, non-reflexive subspace. We consider the case $M = {\mathcal L}(X,Z)$. We use the canonical embedding of a Banach space $X$ in its bidual $X^{\ast\ast}$ and also recall the identification of the bidual of the quotient space $(Y/Z)^{\ast\ast}$ as $ Y^{\ast\ast}/Z^{\bot\bot}$ and $Z^{\ast\ast}$ as $Z^{\bot\bot} \subset Y^{\ast\ast}$. Using these embedding, we have, $d(y,Z) = d(y,Z^{\bot\bot})$ for all $y \in Y$. For $T \in {\mathcal L}(X,Y)$, the biadjoint, $T^{\ast\ast} \in {\mathcal L}(X^{\ast\ast},Y^{\ast\ast})$ and when $T$ is $Z$-valued, $T^{\ast\ast}$ takes values in $Z^{\perp\perp}$ (we also note for a future use, when $T$ is compact, $T^{\ast\ast}$ takes values in $Z$).
 \vskip 1em

 So in this case our problem can be formulated as, if $T \perp M $, when is $T^{\ast\ast} \perp {\mathcal L}(X^{\ast\ast},Z^{\bot\bot})$? We note that
 ${\mathcal L}(X^{\ast\ast},Z^{\bot\bot})$ is closed in the weak$^\ast$-operator topology of ${\mathcal L}(X^{\ast\ast},Y^{\ast\ast})$, and
 we show that this is the smallest closed subspace containing $M$. So that by solving this orthogonality problem, we have a solution to the biadjoint question proposed at the beginning, somewhat analogous to the von Neumann double commutant theorem for $C^\ast$-algebras (interpreted as concrete identification of the closure of a space, in the weak-operator topology) . We refer to
 \cite{DU} for standard results on spaces of operators and tensor products. 
 \vskip 1em
 This paper is an expanded version of my talk given at ICLAA2021,  organised by CARAMS, MAHE, Manipal during December 15-17, 2021. I thank the Editors of the ICLAA-special volume for their efficient handling of the MS during the pandemic. Thanks are also due to the referee for the extensive comments.
 \section{Main Result}
 
 \vskip 1em
 
 For a Banach space $X$, we denote the closed unit ball  by $X_1$ . Following the lead of the well-known Bhatia-Semrl theorem \cite{BS} on point-wise orthogonality
 of two orthogonal operators (established in the case of finite dimensional Hilbert spaces),
 one can ask, if $T \perp {\mathcal L}(X,Z)$, does there exist an extreme point $\Lambda \in X^{\ast\ast}_1$ such that $\|T\|=\|T^{\ast\ast}\|= \|T^{\ast\ast}(\Lambda)\|$ and $T^{\ast\ast}(\Lambda) \perp Z^{\bot\bot}$?
 \vskip 1em
 
 Most generalizations of the Bhatia-Semrl theorem for Hilbert spaces or for operators between Banach spaces (see the survey article \cite{PS}) require
 $T$ to attain its norm, which is a strong requirement even when $X$ is reflexive. On the other hand, it is well-known that if $T$ is a compact operator, $T^{\ast\ast}$ attains its norm at an extreme point and also a result of V. Zizler, \cite{Z},  asserts that $\{T \in {\mathcal L}(X,Y): T^{\ast\ast}~attains~its~norm\}$ is norm dense in ${\mathcal L}(X,Y)$. Thus anticipating $T^{\ast\ast}$ to attain its norm is a more natural condition for non-reflexive Banach spaces. Now we propose a general interpretation of the Bhatia-Semrl theorem as ` when does global behaviour  on an operator, leads to a similar  local behaviour at points where the biadjoint attains its norm?' Our generalization here involves orthogonality of an operator to a space of operators. See \cite{R1} for another interpretation.
 \vskip 1em
 We will first investigate the sufficient condition, leading to very interesting minimax formulae, see condition (3) in Proposition 1 below.
 It is well known and easy to see that $y \perp Z$ if and only if $d(y,Z) = \|y\|$.
 
 \begin{prop} Suppose $\|T\|=\|T^{\ast\ast}\|= \|T^{\ast\ast}(\Lambda)\|$ for some $\Lambda \in X^{\ast\ast}_1$ and $T^{\ast\ast}(\Lambda) \perp Z^{\bot\bot}$. The following hold.
 \begin{enumerate}

\item $T^{\ast\ast} \perp {\mathcal L}(X^{\ast\ast},Z^{\bot\bot})$ as 
 well as $T \perp {\mathcal L}(X,Z)$.
 \item Now using the distance interpretation of orthogonality
 for subspaces, in this case we have,$$ \|T\|=\|T^{\ast\ast}\|= d(T,{\mathcal L}(X,Z)) = d(T^{\ast\ast}, {\mathcal L}(X^{\ast\ast},Z^{\bot\bot})).$$
 \item $d(T^{\ast\ast}, {\mathcal L}(X^{\ast\ast},Z^{\bot\bot}))= sup_{\tau \in X^{\ast\ast}_1} \{d(T^{\ast\ast}(\tau),Z^{\bot\bot})\} = d(T^{\ast\ast}(\Lambda), Z^{\bot\bot})$.
\end{enumerate}
 	
 \end{prop}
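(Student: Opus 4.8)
The plan is to extract all three conclusions from the single pointwise hypothesis at $\Lambda$, using throughout the elementary equivalence recalled just before the statement---for a closed subspace $M$ and a vector $y$, one has $y \perp M$ if and only if $d(y,M) = \|y\|$---together with the standard facts that $\|S\| = \|S^{\ast\ast}\|$ for every bounded operator, that $(T+S)^{\ast\ast} = T^{\ast\ast} + S^{\ast\ast}$, and that $S \mapsto S^{\ast\ast}$ carries ${\mathcal L}(X,Z)$ isometrically into ${\mathcal L}(X^{\ast\ast}, Z^{\bot\bot})$ under the identification $Z^{\ast\ast} = Z^{\bot\bot} \subset Y^{\ast\ast}$ recalled in the introduction.

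For (1), fix $S \in {\mathcal L}(X^{\ast\ast}, Z^{\bot\bot})$. The crucial observation is that $S(\Lambda) \in Z^{\bot\bot}$, so the hypothesis $T^{\ast\ast}(\Lambda) \perp Z^{\bot\bot}$ gives $\|T^{\ast\ast}(\Lambda) + S(\Lambda)\| \geq \|T^{\ast\ast}(\Lambda)\|$; since $\|\Lambda\| \leq 1$ this yields
$$\|T^{\ast\ast} + S\| \geq \|(T^{\ast\ast}+S)(\Lambda)\| \geq \|T^{\ast\ast}(\Lambda)\| = \|T^{\ast\ast}\|,$$
hence $T^{\ast\ast} \perp {\mathcal L}(X^{\ast\ast}, Z^{\bot\bot})$. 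Since $S \mapsto S^{\ast\ast}$ embeds ${\mathcal L}(X,Z)$ isometrically inside ${\mathcal L}(X^{\ast\ast}, Z^{\bot\bot})$ and $\|T\| = \|T^{\ast\ast}\|$, for every $S \in {\mathcal L}(X,Z)$ we then get $\|T + S\| = \|T^{\ast\ast} + S^{\ast\ast}\| \geq \|T^{\ast\ast}\| = \|T\|$, so $T \perp {\mathcal L}(X,Z)$.

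For (2), I would feed the two conclusions of (1) into the distance characterization: $T \perp {\mathcal L}(X,Z)$ gives $d(T, {\mathcal L}(X,Z)) = \|T\|$, and $T^{\ast\ast} \perp {\mathcal L}(X^{\ast\ast}, Z^{\bot\bot})$ gives $d(T^{\ast\ast}, {\mathcal L}(X^{\ast\ast}, Z^{\bot\bot})) = \|T^{\ast\ast}\|$; combining with $\|T\| = \|T^{\ast\ast}\|$ gives the displayed chain of equalities. For (3), the plan is a sandwich of inequalities. For any $S \in {\mathcal L}(X^{\ast\ast}, Z^{\bot\bot})$ and any $\tau \in X^{\ast\ast}_1$, since $S(\tau) \in Z^{\bot\bot}$ we have $\|T^{\ast\ast} - S\| \geq \|T^{\ast\ast}(\tau) - S(\tau)\| \geq d(T^{\ast\ast}(\tau), Z^{\bot\bot})$; taking the infimum over $S$ and then the supremum over $\tau$ gives $d(T^{\ast\ast}, {\mathcal L}(X^{\ast\ast}, Z^{\bot\bot})) \geq \sup_{\tau \in X^{\ast\ast}_1} d(T^{\ast\ast}(\tau), Z^{\bot\bot}) \geq d(T^{\ast\ast}(\Lambda), Z^{\bot\bot})$, the last inequality because $\Lambda \in X^{\ast\ast}_1$. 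Finally $T^{\ast\ast}(\Lambda) \perp Z^{\bot\bot}$ gives $d(T^{\ast\ast}(\Lambda), Z^{\bot\bot}) = \|T^{\ast\ast}(\Lambda)\| = \|T^{\ast\ast}\|$, which equals $d(T^{\ast\ast}, {\mathcal L}(X^{\ast\ast}, Z^{\bot\bot}))$ by (2); the three quantities are thus squeezed between equal ends and must all coincide. I do not expect a genuine obstacle: the argument is essentially bookkeeping, and the only care needed is with the double-duality identifications already set up in the introduction and with the fact that the pointwise hypothesis is invoked twice---once to push $\|T^{\ast\ast}\|$ through evaluation at $\Lambda$, and once to turn $d(T^{\ast\ast}(\Lambda), Z^{\bot\bot})$ into $\|T^{\ast\ast}(\Lambda)\|$.
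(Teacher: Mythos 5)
Your proposal is correct and follows essentially the same route as the paper: evaluate at $\Lambda$ to get $\|T^{\ast\ast}+S\|\geq\|T^{\ast\ast}(\Lambda)+S(\Lambda)\|\geq\|T^{\ast\ast}(\Lambda)\|$ for (1) (the paper leaves the passage to $T\perp{\mathcal L}(X,Z)$ via $S\mapsto S^{\ast\ast}$ implicit, which you spell out), use the distance characterization for (2), and squeeze $\sup_{\tau}d(T^{\ast\ast}(\tau),Z^{\bot\bot})$ between $d(T^{\ast\ast}(\Lambda),Z^{\bot\bot})=\|T^{\ast\ast}(\Lambda)\|$ and $d(T^{\ast\ast},{\mathcal L}(X^{\ast\ast},Z^{\bot\bot}))$ for (3). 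No gaps.
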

\begin{proof}

 To see 1) and 2), for any $S \in {\mathcal L}(X^{\ast\ast},Z^{\bot\bot})$, $\|T^{\ast\ast}-S\|\geq \|T^{\ast\ast}(\Lambda)-S(\Lambda)\| \geq \|T^{\ast\ast}(\Lambda)\|= \|T\|$, since $S(\Lambda) \in Z^{\bot\bot}$. Thus $T^{\ast\ast} \perp {\mathcal L}(X^{\ast\ast},Z^{\bot\bot})$ as 
 well as $T \perp {\mathcal L}(X,Z)$. Now using the distance interpretation of orthogonality
 for subspaces, in this case we have,$$ \|T\|=\|T^{\ast\ast}\|= d(T,{\mathcal L}(X,Z)) = d(T^{\ast\ast}, {\mathcal L}(X^{\ast\ast},Z^{\bot\bot})).$$
 \vskip 1em
 To see 3), by the distance interpretation of orthogonality again, $$(*)~~ d(T^{\ast\ast}(\Lambda),Z^{\bot\bot}) = \|T^{\ast\ast}(\Lambda)\| \leq sup_{\tau \in X^{\ast\ast}_1} \{d(T^{\ast\ast}(\tau),Z^{\bot\bot})\}. $$
 
 Fix $S \in {\mathcal L}(X^{\ast\ast},Z^{\bot\bot})$ and $\tau \in X^{\ast\ast}_1$.
 $$d(T^{\ast\ast}(\tau),Z^{\bot\bot}) \leq \|T^{\ast\ast}(\tau)-S(\tau)\|\leq \|T^{\ast\ast}-S\|.$$
 Therefore $sup_{\tau \in X^{\ast\ast}_1} \{d(T^{\ast\ast}(\tau),Z^{\bot\bot})\} \leq d(T^{\ast\ast},Z^{\bot\bot})$.
 Thus by $(*)$ and (2), we get (3).

 \end{proof}
 \vskip 1em
\begin{rem}
	
We note that the conclusions of Proposition 1 are also valid if we replace bounded operators by compact operators. In Proposition 1, suppose $T$ is also a compact operator. Since $T^{\ast\ast}(\Lambda) \perp Z$, we also have, $$\|T^{\ast\ast}(\Lambda)\| = d(T^{\ast\ast}(\Lambda),Z).$$
 
 \end{rem}
 \begin{rem} It is of independent interest to note that, for $T \in {\mathcal L}(X,Y)$, if 
 $$d(T, {\mathcal L}(X,Z))= sup_{x \in X_1} \{d(T(x),Z)\}.$$ then again (3) holds. To see this note that $d(T(x),Z) = d(T^{\ast\ast}(x),Z^{\bot\bot})$. Now $$d(T^{\ast\ast},{\mathcal L}(X^{\ast\ast},Z^{\bot\bot}) \leq d(T,{\mathcal L}(X,Z))$$
 and it follows from the proof of (3),
 $$	sup_{x \in X_1} \{d(T(x),Z)\} \leq sup_{\tau \in X^{\ast\ast}_1} \{d(T^{\ast\ast}(\tau),Z^{\bot\bot})\} \leq d(T^{\ast\ast},Z^{\bot\bot}).$$
 Thus (3) holds.
 \end{rem}
 Our aim in this paper is to partially solve the converse problem, for all domains $X$ and imposing conditions only on the pair $(Y,Z)$. 
\vskip 1em
 Since Proposition 1 is a solution to an optimization problem
 (see \cite{R}) on the weak$^\ast$-compact convex set $X^{\ast\ast}_1$, we next show that if
 there is a solution, then there is an extremal solution. We recall that for a compact convex set $K$, $E \subset K$ is an extreme set, if for $x,y \in K$, $\lambda \in (0,1)$, $\lambda x + (1-\lambda)y \in E$ implies $x,y \in E$.
 An extreme convex set is called a face. For $x \in E$, if $face(x)$ denotes the smallest face of $K$, containing $x$, then clearly $E = \cup\{face(x): x \in E\}$. See \cite{A} page 122.
 We apply these ideas to $K = X^{\ast\ast}_1$, equipped with the weak$^\ast$-topology, along with a Krein-Milman type theorem to get the extremal solution.
 \begin{prop}
 Let $\Lambda \in X^{\ast\ast}_1$ be such that
 $$\|T^{\ast\ast}\|= \|T^{\ast\ast}(\Lambda)\|~and~	
 sup_{\tau \in X^{\ast\ast}_1} \{d(T^{\ast\ast}(\tau),Z^{\bot\bot})\} = d(T^{\ast\ast}(\Lambda), Z^{\bot\bot}).$$ Then there is an extreme point of $X^{\ast\ast}_1$, satisfying the same conditions.

 \end{prop}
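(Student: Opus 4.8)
The plan is to deduce from the two extremal conditions that $\Lambda$ lies in two genuinely weak$^\ast$-closed \emph{faces} of $X^{\ast\ast}_1$, and then to apply the Krein--Milman theorem inside their intersection.

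First I would rewrite the second condition as a norm-attainment statement. Writing $\pi:Y\to Y/Z$ for the quotient map and using the identifications $(Y/Z)^{\ast\ast}=Y^{\ast\ast}/Z^{\bot\bot}$ recalled in the Introduction (under which $\pi^{\ast\ast}$ becomes the quotient map $Y^{\ast\ast}\to Y^{\ast\ast}/Z^{\bot\bot}$), one gets $d(T^{\ast\ast}(\tau),Z^{\bot\bot})=\|R^{\ast\ast}(\tau)\|$ for $R:=\pi\circ T\in\mathcal L(X,Y/Z)$; hence $\sup_{\tau\in X^{\ast\ast}_1} d(T^{\ast\ast}(\tau),Z^{\bot\bot})=\|R^{\ast\ast}\|$, and the hypothesis on $\Lambda$ says precisely that $\|T^{\ast\ast}(\Lambda)\|=\|T^{\ast\ast}\|$ and $\|R^{\ast\ast}(\Lambda)\|=\|R^{\ast\ast}\|$ --- two norm-attainment statements for biadjoints of operators with domain $X$.

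Next, to each such statement I attach a closed face. Assume $T\neq 0$ (if $T=0$ every extreme point of $X^{\ast\ast}_1$ works). By Hahn--Banach choose $y_0^\ast\in Y^\ast_1$ with $\langle T^{\ast\ast}(\Lambda),y_0^\ast\rangle=\|T^{\ast\ast}\|$; then $x_1^\ast:=T^\ast y_0^\ast\in X^\ast$ satisfies $\|x_1^\ast\|=\|T^{\ast\ast}\|$ and $\langle\Lambda,x_1^\ast\rangle=\|x_1^\ast\|$. Put $F_1:=\{\tau\in X^{\ast\ast}_1:\langle\tau,x_1^\ast\rangle=\|x_1^\ast\|\}$. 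Since $x_1^\ast\in X^\ast$, the functional $\langle\cdot,x_1^\ast\rangle$ is weak$^\ast$-continuous, so $F_1$ is weak$^\ast$-closed; it is the set on which this functional attains its maximum over $X^{\ast\ast}_1$, hence a face; it contains $\Lambda$; and for $\tau\in F_1$ one has $\|T^{\ast\ast}(\tau)\|\geq|\langle\tau,T^\ast y_0^\ast\rangle|=\|T^{\ast\ast}\|$, so $\|T^{\ast\ast}(\tau)\|=\|T^{\ast\ast}\|$. Running the identical argument with $R$ in place of $T$, choosing $\psi_0\in (Y/Z)^\ast_1=Z^\bot_1$ and noting that $R^\ast\psi_0=T^\ast\psi_0\in X^\ast$, yields a weak$^\ast$-closed face $F_2\subseteq X^{\ast\ast}_1$ that contains $\Lambda$ and is contained in $\{\tau:d(T^{\ast\ast}(\tau),Z^{\bot\bot})=\|R^{\ast\ast}\|\}$ (take $F_2=X^{\ast\ast}_1$ if $R=0$).

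Finally, $F:=F_1\cap F_2$ is a nonempty, weak$^\ast$-compact, convex face of $X^{\ast\ast}_1$. By the Krein--Milman theorem it has an extreme point $\xi$; since $F$ is a face of $X^{\ast\ast}_1$, $\xi$ is an extreme point of $X^{\ast\ast}_1$, and by construction it satisfies $\|T^{\ast\ast}(\xi)\|=\|T^{\ast\ast}\|$ and $d(T^{\ast\ast}(\xi),Z^{\bot\bot})=\sup_{\tau\in X^{\ast\ast}_1} d(T^{\ast\ast}(\tau),Z^{\bot\bot})$, which are exactly the conditions. The one place needing care --- and the reason the statement is not completely immediate --- is that the level sets $\{\tau:\|T^{\ast\ast}(\tau)\|=\|T^{\ast\ast}\|\}$ and $\{\tau:d(T^{\ast\ast}(\tau),Z^{\bot\bot})=\sup(\cdots)\}$ are only weak$^\ast$-$G_\delta$ rather than closed (the norm on $X^{\ast\ast}$ is merely weak$^\ast$-lower semicontinuous), so Krein--Milman cannot be applied to them directly; passing instead to the exposed faces $F_1,F_2$ produced by the norming functionals is what repairs this, after which the standard fact that extreme points of a face are extreme points of the ambient set closes the argument. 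This is the Krein--Milman type theorem alluded to before the statement.
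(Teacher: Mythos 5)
Your reduction to exposed faces breaks down at the very first step. You write ``by Hahn--Banach choose $y_0^\ast\in Y^\ast_1$ with $\langle T^{\ast\ast}(\Lambda),y_0^\ast\rangle=\|T^{\ast\ast}\|$.'' But $T^{\ast\ast}(\Lambda)$ is an element of $Y^{\ast\ast}$, so Hahn--Banach only supplies a norming functional $\phi\in Y^{\ast\ast\ast}_1$; it does not put it in $Y^\ast_1$. Whether the supremum of $T^{\ast\ast}(\Lambda)$ over the \emph{predual} ball $Y^\ast_1$ is attained is exactly a norm-attainment question for a functional on the non-reflexive space $Y^\ast$, and by James' theorem there are always elements of $Y^{\ast\ast}$ that fail to attain it (the paper's standing hypothesis is that $Y$ is non-reflexive). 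There is no reason the particular element $T^{\ast\ast}(\Lambda)$ should attain, and the hypotheses of the Proposition give you none. The same objection applies to your choice of $\psi_0\in Z^\bot_1$ norming $R^{\ast\ast}(\Lambda)\in (Y/Z)^{\ast\ast}$: in this Proposition $Z$ is an arbitrary closed subspace, so $Y/Z$ need not be finite dimensional (that assumption enters only in the later Theorem). If you repair the step by taking $\phi\in Y^{\ast\ast\ast}_1$, then $x_1^\ast:=T^{\ast\ast\ast}\phi$ lies in $X^{\ast\ast\ast}$ and is in general not weak$^\ast$-continuous on $X^{\ast\ast}$, so your set $F_1$ is a face but not weak$^\ast$-closed, and the Krein--Milman step at the end collapses --- which is precisely the difficulty you yourself flagged about the level sets, now reappearing one dual up.

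This is why the paper does not argue through exposed faces at all. It takes $E$ to be the set of all $\tau_0$ satisfying both extremal conditions, notes that $E$ is an extreme subset of $X^{\ast\ast}_1$, and writes its complement as a countable union of sublevel sets $\{\|T^{\ast\ast}(\tau')\|\leq\|T^{\ast\ast}\|-\tfrac1n\}$ and $\{d(T^{\ast\ast}(\tau'),Z^{\bot\bot})\leq \sup(\cdots)-\tfrac1n\}$, which are weak$^\ast$-compact and convex by weak$^\ast$-lower semicontinuity of the norm and weak$^\ast$-continuity of $T^{\ast\ast}$ and $\pi^{\ast\ast}$. A theorem of Johannesen (Theorem 5.8 in Lima's paper) then guarantees that such an extreme set, nonempty by hypothesis, contains an extreme point of $X^{\ast\ast}_1$ --- no weak$^\ast$-closedness of $E$ and no norming functional is needed. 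If you want to salvage your argument, you would have to either invoke a result of this Johannesen/Lima type, or add hypotheses (e.g.\ $T^{\ast\ast}(\Lambda)\in Y$, or $Y/Z$ finite dimensional together with attainment for $T$ itself) that are not available here.
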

\begin{proof}
Let $E = \{\tau_0 \in X^{\ast\ast}_1: \|T^{\ast\ast}(\tau_0)\|= \|T^{\ast\ast}\|~and~ sup_{\tau \in X^{\ast\ast}_1} \{d(T^{\ast\ast}(\tau),Z^{\bot\bot})\} = d(T^{\ast\ast}(\tau_o), Z^{\bot\bot})\}$.
It is easy to see that being an intersection of two extreme sets, $E$ is an extreme subset of $X^{\ast\ast}_1$.
Now we use a result of T. Johannesen (Theorem 5.8 in \cite{Lima}), which says that if the complement of $E$, $E^{c}$ in $X^{\ast\ast}_1$ is a union
of weak$^\ast$-compact convex sets, then $E$ has an extreme point of  $X^{\ast\ast}_1$ .
Note that $$E^{c} = \cup_n\{\tau' \in X^{\ast\ast}_1:
\|T^{\ast\ast}(\tau')\| \leq \|T^{\ast\ast}\|-\frac{1}{n}\}$$
$$\cup_n\{ \tau' \in X^{\ast\ast}_1: d(T^{\ast\ast}(\tau'),Z^{\bot\bot})\leq sup_{\tau \in X^{\ast\ast}_1} \{d(T^{\ast\ast}(\tau),Z^{\bot\bot})\}-\frac{1}{n}\}.$$
We next note that if $\pi: Y \rightarrow Y/Z$ is the quotient map, then $\pi^{\ast\ast}: Y^{\ast\ast} \rightarrow Y^{\ast\ast}/Z^{\bot\bot}$ is the canonical quotient map and $d(T^{\ast\ast}(\tau),Z^{\perp\perp}) = \|\pi^{\ast\ast}(\tau)\|.$
Thus by the convexity and weak$^\ast$-lower-semi-continuity of $\|.\|$ and the weak$^\ast$-continuity of the adjoint map, we see that all the sets in the union are weak$^\ast$-compact and convex. Thus $E$ has an extreme point of $X^{\ast\ast}_1$.
	
\end{proof}
We recall that ${\mathcal L}(X,Y^{\ast\ast})$ is the dual of the projective tensor product space $X \otimes_{\pi}Y^\ast$. See \cite{DU} Chapter VIII. Our next Lemma is folklore and is included here for the sake of completeness. We use the canonical identification of $Y^\ast/Z^\bot = Z^\ast$ and $\pi: Y^\ast \rightarrow Y^\ast/Z^\bot$ denotes the quotient map. For $\Lambda \in X^{\ast\ast}$ and $y^\ast \in Y^\ast$,  $\Lambda \otimes \pi(y^\ast)$ denotes the (weak$^\ast$-continuous functional) on ${\mathcal L}(X^{\ast\ast},Z^{\bot\bot})$, defined by $(\Lambda \otimes \pi(y^\ast))(S) = S(\Lambda)(y^\ast)$ for $S \in {\mathcal L}(X^{\ast\ast},Z^{\bot\bot})$. Thus 
$(\Lambda \otimes \pi(y^\ast))(T^{\ast\ast}) = \Lambda(T^\ast(\pi(y^\ast)))$, for any $T \in {\mathcal L}(X,Z)$.
\begin{lem}
	Let $Z \subset Y$ be a closed subspace and let $M = \{T^{\ast\ast}: T \in {\mathcal L}(X,Z)\}$. Then $M$ is weak$^\ast$-dense in ${\mathcal L}(X^{\ast\ast},Z^{\bot\bot})=(X^{\ast\ast} \otimes_{\pi} Y^\ast/Z^\bot)^\ast$.
\end{lem}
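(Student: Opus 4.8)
The plan is to pass to the predual, reduce weak$^\ast$-density to the triviality of a pre-annihilator, and exhaust that pre-annihilator first with rank-one operators and then, in general, with the principle of local reflexivity.

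First I would fix the duality. Since $Y^\ast/Z^\bot$ is canonically $Z^\ast$, its dual is $Z^{\ast\ast}=Z^{\bot\bot}$, and the standard trace duality ${\mathcal L}(E,F^\ast)=(E\otimes_\pi F)^\ast$ (\cite{DU}, Chapter VIII) gives ${\mathcal L}(X^{\ast\ast},Z^{\bot\bot})=(X^{\ast\ast}\otimes_\pi(Y^\ast/Z^\bot))^\ast$, under which $M$ sits inside with $(\Lambda\otimes\pi(y^\ast))(T^{\ast\ast})=\Lambda(T^\ast(\pi(y^\ast)))$, as recorded just above. As $M$ is a linear subspace of a dual space, the Hahn--Banach theorem reduces weak$^\ast$-density of $M$ to showing that every $w\in X^{\ast\ast}\otimes_\pi(Y^\ast/Z^\bot)$ with $(T^{\ast\ast})(w)=0$ for all $T\in{\mathcal L}(X,Z)$ is $0$. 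So I would fix such a $w=\sum_n\Lambda_n\otimes\pi(y_n^\ast)$ with $\sum_n\|\Lambda_n\|\,\|\pi(y_n^\ast)\|<\infty$.

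Second, I would test against rank-one operators. For $x^\ast\in X^\ast$, $z\in Z$ the operator $x^\ast\otimes z\in{\mathcal L}(X,Z)$ has adjoint $\pi(y^\ast)\mapsto\pi(y^\ast)(z)\,x^\ast$, so the hypothesis gives $\sum_n\pi(y_n^\ast)(z)\,\Lambda_n(x^\ast)=0$ for all $x^\ast,z$; since the series $\sum_n\pi(y_n^\ast)(z)\Lambda_n$ converges absolutely in $X^{\ast\ast}$ for fixed $z$, this says precisely that the nuclear operator $R\colon Z\to X^{\ast\ast}$, $R(z)=\sum_n\pi(y_n^\ast)(z)\Lambda_n$, is zero, equivalently $\sum_n\Lambda_n(x^\ast)\pi(y_n^\ast)=0$ in $Z^\ast$ for every $x^\ast$. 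This already yields $w=0$ whenever $X^{\ast\ast}$ or $Z^\ast$ has the approximation property, since then $w$ is recovered from $R$.

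In general the last step is to derive $w=0$ using infinite-rank operators. Assuming $w\neq0$, pick (Hahn--Banach) $V\in{\mathcal L}(X^{\ast\ast},Z^{\bot\bot})$ of norm one with $V(w)\neq0$ and $N$ so large that $\sum_{n\le N}\langle V\Lambda_n,\pi(y_n^\ast)\rangle\neq0$ while the tail $\sum_{n>N}\|\Lambda_n\|\,\|\pi(y_n^\ast)\|$ is as small as we like. Apply the principle of local reflexivity to $\mathrm{span}\{\Lambda_1,\dots,\Lambda_N\}\subseteq X^{\ast\ast}$, to its image under $V$ in $Z^{\bot\bot}$, and to the functionals $\pi(y_1^\ast),\dots,\pi(y_N^\ast)$, to obtain $u\colon\mathrm{span}\{\Lambda_n\}\to Z$ with $\langle u\Lambda_n,\pi(y_n^\ast)\rangle=\langle V\Lambda_n,\pi(y_n^\ast)\rangle$ for $n\le N$; since the restriction map $X^\ast\to(\mathrm{span}\{\Lambda_n\})^\ast$ is a metric surjection (its adjoint is the isometric inclusion into $X^{\ast\ast}$), the functionals $u^\ast\pi(y_n^\ast)$ lift to $X^\ast$, and combining them with a biorthogonal system in $Z$ for the $\pi(y_n^\ast)$ produces $T\in{\mathcal L}(X,Z)$ with $\langle T^{\ast\ast}\Lambda_n,\pi(y_n^\ast)\rangle=\langle V\Lambda_n,\pi(y_n^\ast)\rangle$ for $n\le N$. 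Then $(T^{\ast\ast})(w)=\sum_{n\le N}\langle V\Lambda_n,\pi(y_n^\ast)\rangle+\sum_{n>N}\langle\Lambda_n,T^\ast\pi(y_n^\ast)\rangle$ with the second sum bounded by $\|T\|\sum_{n>N}\|\Lambda_n\|\,\|\pi(y_n^\ast)\|$; if the truncation is chosen fine enough relative to $\|T\|$ this makes $(T^{\ast\ast})(w)\neq0$, a contradiction. The genuine obstacle is precisely this: arranging the local-reflexivity construction so that $\|T\|$ stays controlled (e.g. by first replacing the representing series of $w$ by one whose tail decays fast enough to beat the growth of $\|T\|$ with $N$); the duality bookkeeping and the rank-one computation are routine.
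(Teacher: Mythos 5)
Your reduction to the pre-annihilator, the trace-duality bookkeeping, and the rank-one computation are all correct: testing $w=\sum_n\Lambda_n\otimes\pi(y_n^\ast)$ against $x^\ast\otimes z$ does show that the induced operators $Z\to X^{\ast\ast}$ and $X^\ast\to Z^\ast$ vanish, and (by Grothendieck's characterization of the approximation property) this gives $w=0$ when $X^{\ast\ast}$ or $Z^\ast$ has the AP. The problem is the final step, and it is a genuine gap, as you yourself concede: the contradiction argument needs $T\in{\mathcal L}(X,Z)$ matching $V$ on the first $N$ pairs \emph{with $\|T\|$ bounded independently of $N$}, since the tail estimate is $\|T\|\sum_{n>N}\|\Lambda_n\|\,\|\pi(y_n^\ast)\|$. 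Plain local reflexivity does not deliver this: the quantities you must reproduce are $\langle\Lambda_n,T^\ast\pi(y_n^\ast)\rangle$, and the functionals $T^\ast\pi(y_n^\ast)$ depend on the very operator being built (a circularity your sketch does not break), while the proposed assembly through lifted functionals and a biorthogonal system in $Z$ produces operators whose norms grow with $N$. What you would really need is a bounded (Goldstine/Kaplansky-type) density of the ball of $\{T^{\ast\ast}\}$ in the ball of ${\mathcal L}(X^{\ast\ast},Z^{\bot\bot})$, and no argument for that is given; so the lemma in full generality, i.e. without an AP hypothesis, is not established by your proposal.

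For comparison, the paper's own proof is much shorter and does not meet this difficulty because it never engages it: it assumes a \emph{single} elementary tensor $\Lambda\otimes\pi(y^\ast)$ annihilates $M$, shows via rank-one operators that $\Lambda=0$, and invokes the weak$^\ast$-separation theorem. Your rank-one step subsumes that argument (and handles finite sums of elementary tensors by choosing the $\pi(y_n^\ast)$ linearly independent), which is exactly what is needed for density in the weak$^\ast$-operator topology, the topology the introduction actually uses. The subtlety you isolate — that weak$^\ast$-continuous functionals for the predual $X^{\ast\ast}\otimes_\pi Y^\ast/Z^\bot$ are infinite absolutely convergent sums, where approximation-property issues enter — is precisely the point on which the paper's separation argument is silent. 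So your proposal is more careful than the paper on the duality, but its endgame for the AP-free case does not close.
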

\begin{proof} Suppose for $\Lambda \in X^{\ast\ast}$ and $y^\ast \in Y^\ast$, $(\Lambda \otimes \pi(y^\ast)(T^{\ast\ast})=0$ for all $T \in {\mathcal L}(X,Z)$. For any $z \in Z$ choose $y^\ast$ such that $\pi(y^\ast)(z)=1$. For $x^\ast \in X^\ast$ and for the operator $T^\ast = z \otimes x^\ast$, we have $\Lambda(T^\ast(\pi(y^\ast))) = 0= \Lambda (x^\ast)$. Therefore $\Lambda =0$. Hence by an application of the separation theorem for the weak$^\ast$-topology, $M$ is weak$^\ast$-dense in ${\mathcal L}(X^{\ast\ast},Z^{\bot\bot})$.
	
\end{proof}
We recall that if $\pi: Y \rightarrow Y/Z$ is the quotient map, then $\pi^\ast$ is the inclusion map
of $Z^\bot$ in $Y^\ast$. In what follows, we use a
contractive projection to achieve the required compact `lifting'. See \cite{R2} for other alternate approaches.

 \begin{thm} Let $X$ be a Banach space and let $Z \subset Y$ be a closed finite codimensional subspace such that there is a contractive linear projection $P: Y \rightarrow Y$ such that $ker(P)=Z$. Let $T \in {\mathcal L}(X,Y)$ and $T \perp {\mathcal L}(X,Z)$. Then there is an extreme point $\Lambda \in X^{\ast\ast}_1$ such that $\|T\|=\|T^{\ast\ast}\|=\|T^{\ast\ast}(\Lambda)\|$ and $T^{\ast\ast}(\Lambda) \perp Z^{\bot\bot}$.
\end{thm}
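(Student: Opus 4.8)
The plan is to reduce the problem to a compact operator by exploiting the contractive projection $P$. Write $F = P(Y) = \mathrm{range}(P)$. Since $\ker P = Z$ has finite codimension in $Y$ and $Y = Z \oplus F$, the space $F$ is finite dimensional, so $PT \in \mathcal{L}(X,F)$ is compact: the image of $X_1$ is a bounded subset of the finite dimensional space $F$. The key point is then that $(PT)^{\ast\ast} = P^{\ast\ast}T^{\ast\ast}$ is weak$^\ast$-to-norm continuous (its range lies in $F^{\bot\bot} = F$, where the weak$^\ast$ and norm topologies agree), and that $P^{\ast\ast}$ is again a contractive projection whose kernel is $Z^{\bot\bot}$.

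First I would record the distance formula $d(T,\mathcal{L}(X,Z)) = \|PT\|$. For $S \in \mathcal{L}(X,Z)$ one has $PS = 0$ (as $S$ is $\ker P = Z$ valued), so $\|T-S\| \ge \|P(T-S)\| = \|PT\|$ since $\|P\| \le 1$; and the operator $S_0 = (I-P)T$, which is $Z$-valued because $\mathrm{range}(I-P) = \ker P = Z$, gives $\|T - S_0\| = \|PT\|$. Because $T \perp \mathcal{L}(X,Z)$ means $\|T\| = d(T,\mathcal{L}(X,Z))$, this yields $\|T\| = \|PT\| = \|(PT)^{\ast\ast}\|$. Running the identical computation pointwise in $Y^{\ast\ast}$ gives $d(w,Z^{\bot\bot}) = \|P^{\ast\ast}w\|$ for every $w \in Y^{\ast\ast}$; here one uses that $P^\ast$ is a bounded projection on $Y^\ast$ with $\mathrm{range}(P^\ast) = (\ker P)^\bot = Z^\bot$, whence $\ker(P^{\ast\ast}) = (\mathrm{range}\,P^\ast)^\bot = Z^{\bot\bot} = \mathrm{range}(I - P^{\ast\ast})$, exactly the two facts the argument needs.

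Next, since $\tau \mapsto \|(PT)^{\ast\ast}(\tau)\|$ is weak$^\ast$-continuous on the weak$^\ast$-compact set $X^{\ast\ast}_1$, it attains its maximum $\|PT\| = \|T\|$ at some $\tau_0 \in X^{\ast\ast}_1$. For this $\tau_0$ the chain $\|T\| = \|PT\| = \|(PT)^{\ast\ast}(\tau_0)\| = \|P^{\ast\ast}T^{\ast\ast}(\tau_0)\| \le \|T^{\ast\ast}(\tau_0)\| \le \|T^{\ast\ast}\| = \|T\|$ forces $\|T^{\ast\ast}\| = \|T^{\ast\ast}(\tau_0)\|$; moreover $d(T^{\ast\ast}(\tau_0),Z^{\bot\bot}) = \|P^{\ast\ast}T^{\ast\ast}(\tau_0)\| = \|T\|$ while $d(T^{\ast\ast}(\tau),Z^{\bot\bot}) = \|P^{\ast\ast}T^{\ast\ast}(\tau)\| \le \|PT\| = \|T\|$ for every $\tau \in X^{\ast\ast}_1$, so $\sup_{\tau \in X^{\ast\ast}_1} d(T^{\ast\ast}(\tau),Z^{\bot\bot}) = d(T^{\ast\ast}(\tau_0),Z^{\bot\bot})$. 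Thus $\tau_0$ meets the hypotheses of Proposition 2, which upgrades it to an extreme point $\Lambda \in X^{\ast\ast}_1$ satisfying the same two equalities; since the supremum above equals $\|T\| = \|T^{\ast\ast}(\Lambda)\|$, we conclude $d(T^{\ast\ast}(\Lambda),Z^{\bot\bot}) = \|T^{\ast\ast}(\Lambda)\|$, i.e.\ $T^{\ast\ast}(\Lambda) \perp Z^{\bot\bot}$, together with $\|T\| = \|T^{\ast\ast}\| = \|T^{\ast\ast}(\Lambda)\|$, as required.

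The main obstacle is the bookkeeping around $P^{\ast\ast}$: verifying that $P^{\ast\ast}$ stays a contractive projection with $\ker(P^{\ast\ast}) = \mathrm{range}(I - P^{\ast\ast}) = Z^{\bot\bot}$, and that $(PT)^{\ast\ast}$ really is valued in the finite dimensional $F$. This is precisely where the finite codimensionality of $Z$ — and not merely the existence of a contractive projection — is essential, since it is what makes $PT$ compact and hands control back to classical norm-attainment. The two ``distance to the kernel of a norm-one projection'' computations, in $Y$ and in $Y^{\ast\ast}$, are then routine.
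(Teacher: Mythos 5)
Your proof is correct, but it runs along a noticeably different track than the paper's. The paper works with the quotient map: since $Z$ has finite codimension, $\pi\circ T$ is compact, and it invokes the known fact (cited from Harmand--Werner--Werner) that the biadjoint of a compact operator attains its norm at an extreme point $\Lambda\in X^{\ast\ast}_1$ and an extreme functional $y^\ast\in Z^\bot_1$; this hands over the extreme point immediately, and the contractive projection enters only through the lifting $P':Y/Z\to Y$, $P'(\pi(y))=P(y)$, which gives $d(T,\mathcal{L}(X,Z))\le\|S\|=\|\pi\circ T\|$ for $S=P'\circ\pi\circ T$ and closes the chain of inequalities; Proposition 2 is never used there. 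You instead put the projection at the center: the identities $d(T,\mathcal{L}(X,Z))=\|PT\|$ and $d(w,Z^{\bot\bot})=\|P^{\ast\ast}w\|$ (both verified correctly, using $\mathrm{range}(I-P)=\ker P=Z$ and $\ker P^{\ast\ast}=\mathrm{range}(I-P^{\ast\ast})=Z^{\bot\bot}$), plus the finite rank of $PT$, give weak$^\ast$-continuity of $\tau\mapsto\|(PT)^{\ast\ast}\tau\|$ and hence a norm-attaining $\tau_0$, which you then upgrade to an extreme point via the paper's extremal-solution proposition (Johannesen's theorem through Lima). What your route buys is self-containedness on the norm-attainment side (no appeal to the compact-operator extreme-point result) and explicit minimax identities $\sup_\tau d(T^{\ast\ast}\tau,Z^{\bot\bot})=\|PT\|=\|T\|$; what the paper's route buys is that the extreme point comes in one stroke from a classical result, and the argument extends verbatim to the case where one only assumes $\pi\circ T$ compact (as the paper remarks), whereas you lean on finite-dimensionality of $\mathrm{range}(P)$ — though your argument too would survive under mere compactness of $PT$, since biadjoints of compact operators are weak$^\ast$-to-norm continuous on bounded sets.
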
  
\begin{proof} Since $Z$ is of finite codimension, $\pi \circ T: X \rightarrow Y/Z$ is a compact operator. It is well-known (see \cite{HWW} page 226)
that there are extreme points $\Lambda \in X^{\ast\ast}_1$, $y^\ast \in Z^\bot_1$ such that
$\|\pi \circ T\|= \Lambda(T^\ast(y^\ast)) = T^{\ast\ast}(\Lambda)(y^\ast)\leq d(T^{\ast\ast}(\Lambda),Z^{\bot\bot}) \leq sup_{\tau \in X^{\ast\ast}_1}\{d(T^{\ast\ast}(\tau),Z^{\perp\perp})\}\leq d(T^{\ast\ast},{\mathcal L}(X^{\ast\ast},Z^{\bot\bot}))\leq d(T,{\mathcal L}(X,Z))= \|T\|. $

\vskip 1em
Now let $P': Y/Z \rightarrow Y$ defined by $P'(\pi(y))=P(y)$ be the canonical embedding.
Let $S = P' \circ \pi \circ T$. Then $\|S\| \leq \|\pi \circ T\|$. Also note that $\pi \circ S = \pi \circ T$. To see this note that for any $x \in X$, $S(x)-T(x) = P'(\pi(T(x))-T(x) = P(T(x))-T(x) \in Z$. Thus $\|S\|= \|\pi \circ T\|$ and $d(T,{\mathcal L}(X,Z))= d(S,{\mathcal L}(X,Z))\leq \|S\|= \|\pi\circ T\|$.
Therefore equality holds in the inequalities and we have the required conclusion.
	
\end{proof}
\begin{rem}
We have used the finite codimensionality to get $\pi \circ T$ compact. The theorem also holds by	assuming $\pi\circ T$ is compact. Also note that since $P$ is a contractive projection, $\|(x-P(x))-P(y)\|\geq \|P(y)\|$ for all $x,y \in X$, so in Theorem 6, the range of $P$ is contained in $Z^{\dagger}$.
As noted in Remark 2, if one considers these questions only in the class of compact operators, the minimax formulae from Proposition 1, are still valid. It was shown recently in \cite{R1} that these  can be used to compute the directional derivative of the operator norm at these operators.
\end{rem}

\end{document}